\newcommand{\subjclassname@later}{\textup{2010} Mathematics Subject Classification}
\newtheorem{theorem}{Theorem}[section]
\newtheorem{lemma}{Lemma}[section]
\newtheorem{corollary}{Corollary}[section]
\theoremstyle{definition}
\newtheorem{definition}{Definition}[section]
\newtheorem{remark}{Remark}[section]
\numberwithin{equation}{section} \numberwithin{theorem}{section}
\DeclareMathOperator{\RE}{Re}
\begin{document}
\title[Second Hankel Determinant of Certain Univalent Functions]{Bounds for the Second Hankel Determinant \\
of Certain Univalent Functions }

\author[S. K. Lee]{See Keong Lee}
\address{School of Mathematical Sciences, Universiti Sains Malaysia,
11800 USM,  Penang,  Malaysia} \email{sklee@cs.usm.my }

\author[V.  Ravichandran]{ V.  Ravichandran}
\address{School of Mathematical Sciences, Universiti Sains Malaysia,
11800 USM,  Penang,  Malaysia\\ Department of Mathematics, University of Delhi, Delhi 110
007, India} \email{ vravi@maths.du.ac.in }

\author[S. Supramaniam]{Shamani Supramaniam}
\address{School of Mathematical Sciences,
Universiti Sains Malaysia,  11800 USM,  Penang,  Malaysia}
\email{sham105@hotmail.com}

\subjclass[2000]{30C45, 30C80}
\keywords{Analytic functions,  starlike functions, convex functions,  Ma-Minda starlike functions,  Ma-Minda  convex functions, subordination, second Hankel determinant.}

\begin{abstract}The estimates for the second Hankel determinant $a_2a_4-a_3^2$ of analytic function $f(z)=z+a_2 z^2+a_3 z^3+\dotsb$ for which either $zf'(z)/f(z)$ or $1+zf''(z)/f'(z)$ is subordinate to certain analytic function are investigated. The estimates for the Hankel determinant for two other classes are also obtained. In particular, the estimates for the Hankel determinant of strongly starlike, parabolic starlike, lemniscate starlike functions are obtained.
\end{abstract}

\vspace*{-2cm}
\maketitle

\section{Introduction}
Let $\mathcal{A}$ denote the class of all analytic functions \begin{equation}\label{eqf} f(z)=z+a_2 z^{2}+a_{3}z^{3}+\dotsb\end{equation} defined on the open unit disk $\mathbb{D}:=\{ z \in \mathbb{C} : |z| < 1 \}$.
The \emph{Hankel determinants} $H_q (n)$, ($n=1,2,\dotsc$, $q=1,2,\dotsc$)  of the function $f$ are defined by \[H_q (n):=\left|\begin{array}{cccc}
                                                                a_n & a_{n+1} & \cdots & a_{n+q-1} \\
                                                                a_{n+1} & a_{n+2} & \cdots & a_{n+q} \\
                                                               \vdots & \vdots &  & \vdots \\
                                                                a_{n+q-1} & a_{n+q} & \cdots & a_{n+2q-2}
                                                              \end{array}
\right|, \quad (a_1=1).\]
Hankel determinants are useful, for example, in showing that a function of
bounded characteristic in $\mathbb{D}$, i.e.,  a function which is a ratio of two bounded analytic  functions, with its  Laurent series around the origin having integral coefficients,  is rational \cite{cantor}. For the use of Hankel determinant in the study of meromorphic functions, see \cite{wilson}, and various properties of these determinants can be found in  \cite[Chapter 4]{vein}.   In 1966, Pommerenke \cite{pom1} investigated the Hankel determinant  of areally mean  $p$-valent functions, univalent functions as well as for starlike functions. In \cite{pom2}, he proved that the Hankel determinants of  univalent functions satisfy  \[|H_q (n)|<Kn^{-(\frac{1}{2}+\beta)q+\frac{3}{2}} \quad (n=1,2,\dotsc,\ q=2,3,\dotsc),\] where $\beta>1/4000$ and $K$ depends only on $q$.  Later, Hayman \cite{hay} proved that $|H_2{(n)}|<A n^{1/2}$,  $(n=1,2,\dotsc;\ A\ \text{an absolute constant})$ for areally mean univalent functions. In \cite{noon1,noon2,noon3}, the  estimates for Hankel determinant for areally mean $p$-valent functions were investigated.  ElHosh obtained   bounds for Hankel determinants of univalent functions with positive Hayman index $\alpha$ \cite{elhosh} and of $k$-fold symmetric and close-to-convex functions \cite{elhosh1}. For bounds on the Hankel determinants of close-to-convex functions, see  \cite{noor1,noor2,noor3}. Noor studied the Hankel determinant of  Bazilevic functions in \cite{noorbaz} and  of functions with bounded boundary rotation in \cite{noorbr1,noorbr2,noorbr3,noorbr4}.  In the recent years, several authors have investigated bounds for the  Hankel determinant of functions  belonging to various subclasses of univalent and multivalent functions  \cite{arif,hayami1,hayami2,hayami3,janteng,mish,nmoh,murugu}. The
Hankel determinant  $H_2 (1)=a_3-a_2^2$ is the well known Fekete-Szeg\"o functional. For results related to this functional, see \cite{rma1, rma2}. The second Hankel determinant $H_2 (2)$ is given by $H_2(2)=a_2 a_4- a_3^2$.

An analytic function $f$ is\emph{ subordinate} to an analytic function $g$, written $f(z)\prec g(z)$, if  there is an analytic function $w:\mathbb{D}\rightarrow \mathbb{D}$ with $w(0)= 0$ satisfying $f(z)= g(w(z))$. Ma and Minda {\cite{mamin}} unified various subclasses of
starlike ($\mathcal{S^*}$) and convex functions ($\mathcal{C}$) by requiring that either of the quantity $zf'(z)/ f(z)$ or $1 + zf''(z)/ f'(z)$ is subordinate to a  function  $\varphi$ with positive real part in the unit disk $\mathbb{D}$,
$\varphi(0)= 1$, $\varphi'(0)> 0$,   $\varphi$ maps $\mathbb{D}$ onto a region starlike with respect to $1$ and symmetric with respect to the
real axis. He obtained distortion, growth and covering estimates as well as bounds for the initial coefficients of the unified classes.

The bounds for the second Hankel determinant $H_2 (2)=a_2 a_4- a_3^2$ are obtained for functions belonging to these subclasses of Ma-Minda starlike and convex functions in Section 2. In section 3, the problem is investigated for  two other related classes defined by subordination. In proving our results, we do not assume the univalence or starlikeness of $\varphi$ as they were required only in obtaining the distortion, growth estimates and the convolution theorems.   The classes introduced by subordination naturally include several well known classes of univalent functions and the results for some of these special classes are indicated as corollaries.

Let $\mathcal{P}$ be the class of \emph{functions with positive real part} consisting of all analytic functions $p:\mathbb{D}\rightarrow \mathbb{C}$ satisfying $p(0)=1$ and $\RE p(z)>0$. We need the following  results about the functions belonging to the class $\mathcal{P}$:

\begin{lemma} {\rm \cite{duren}}  \/ If  the function $p\in \mathcal{P}$ is  given by the series \begin{equation}\label{eqp}p(z)=1+c_1z+c_2z^2+c_3z^3+\dotsb,\end{equation} then the following sharp estimate holds: \begin{equation}\label{eqc}|c_n|\leq 2 \quad (n= 1, 2,\dotsc).\end{equation}
\end{lemma}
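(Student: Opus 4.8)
The plan is to recover each coefficient $c_n$ as a Fourier integral of the (positive) boundary real part of $p$ and then to exploit positivity directly. Fix $r\in(0,1)$ and set $u(\theta)=\RE p(re^{i\theta})$; by hypothesis $u(\theta)>0$ for every $\theta$. Since the power series \eqref{eqp} converges uniformly on the circle $|z|=r$, I may integrate term by term. Writing $p(re^{i\theta})=1+\sum_{k\ge 1}c_k r^k e^{ik\theta}$ and taking real parts, the only contribution to the frequency-$n$ Fourier mode (for $n\ge 1$) comes from the term $c_n r^n e^{in\theta}$, which enters $u$ with weight $1/2$ because $\RE\bigl(c_n r^n e^{in\theta}\bigr)=\tfrac12\bigl(c_n r^n e^{in\theta}+\overline{c_n r^n}\,e^{-in\theta}\bigr)$. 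Hence
\[
c_n r^n=\frac{1}{\pi}\int_0^{2\pi}u(\theta)e^{-in\theta}\,d\theta\qquad(n\ge 1),
\]
while the zeroth mode records the normalization $\tfrac{1}{2\pi}\int_0^{2\pi}u(\theta)\,d\theta=\RE p(0)=1$.

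With this representation in hand the bound is immediate. Since $u(\theta)>0$, I can move the modulus inside the integral and use $|e^{-in\theta}|=1$ to obtain
\[
|c_n|\,r^n\le\frac{1}{\pi}\int_0^{2\pi}u(\theta)\,d\theta=\frac{1}{\pi}\cdot 2\pi=2,
\]
and letting $r\to 1^-$ yields $|c_n|\le 2$ for every $n\ge 1$. For sharpness I would exhibit the half-plane map $p_0(z)=(1+z)/(1-z)$, which lies in $\mathcal{P}$ and has the expansion $p_0(z)=1+2\sum_{n\ge 1}z^n$, so that $c_n=2$ for all $n$ and equality is attained in every coefficient simultaneously. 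As an alternative one could invoke the Herglotz representation $p(z)=\int_0^{2\pi}\frac{1+e^{-it}z}{1-e^{-it}z}\,d\mu(t)$ with $\mu$ a probability measure, whence $c_n=2\int_0^{2\pi}e^{-int}\,d\mu(t)$ gives $|c_n|\le 2$ at once; the Fourier argument above is preferable here since it is self-contained and does not require first establishing that representation.

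There is no deep obstacle in this classical statement; the only points requiring care are the justification of termwise integration (guaranteed by uniform convergence of \eqref{eqp} on $|z|=r$ for each fixed $r<1$) and the correct bookkeeping of the factor $1/2$ arising from taking the real part. It is precisely this factor, together with the normalization $\int_0^{2\pi}u\,d\theta=2\pi$, that produces the sharp constant $2$ rather than a weaker estimate, so I would take particular care that the final passage to the limit $r\to 1^-$ preserves it.
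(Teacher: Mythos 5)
Your argument is correct: the identity $c_n r^n=\frac{1}{\pi}\int_0^{2\pi}u(\theta)e^{-in\theta}\,d\theta$, the triangle inequality combined with the positivity of $u$ and the normalization $\frac{1}{2\pi}\int_0^{2\pi}u(\theta)\,d\theta=1$, and the limit $r\to1^{-}$ together give $|c_n|\le2$, with sharpness shown by $(1+z)/(1-z)$. The paper offers no proof of this lemma --- it is quoted from Duren --- and your argument is precisely the classical Carath\'eodory-type proof found there, so there is nothing to compare beyond noting that you have supplied the standard argument the citation points to.
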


\begin{lemma}{\rm \cite{szego}} If  the function $p\in \mathcal{P}$ is  given by the series  \eqref{eqp}, then
\begin{align}\label{eqc2}2c_2&=c_1^2+x(4-c_1^2),\\
\label{eqc3}4c_3&=c_1^3+2(4-c_1^2)c_1x-c_1(4-c_1^2)x^2+2(4-c_1^2)(1-|x|^2)z,\end{align} for some $x, z$ with $|x|\leq1$ and $|z|\leq1$.
\end{lemma}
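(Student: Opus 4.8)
The plan is to reduce everything to the iterated Schwarz lemma (the Schur algorithm) for bounded functions, exploiting the standard one-to-one correspondence between $\mathcal{P}$ and the class of self-maps of $\mathbb{D}$. After the rotation $p(z)\mapsto p(e^{i\theta}z)$, which keeps $p$ in $\mathcal{P}$ and only rotates its coefficients, I may normalize so that $c_1\in[0,2]$, which is the setting in which the stated relations are read off. First I would pass to the associated Schur function $\omega(z)=(p(z)-1)/(p(z)+1)$, which maps $\mathbb{D}$ into $\mathbb{D}$ and satisfies $\omega(0)=0$, so that $\omega(z)=z\,h(z)$ with $h$ again bounded by one. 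Expanding the geometric series
\[ p(z)=1+2\sum_{k\ge 1}\omega(z)^{k} \]
and collecting powers of $z$ expresses $c_1,c_2,c_3$ through the Taylor coefficients $b_1,b_2,b_3$ of $\omega$, namely $c_1=2b_1$, $c_2=2b_2+2b_1^2$, and $c_3=2b_3+4b_1b_2+2b_1^3$.

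Next I would run the Schur algorithm on $h$. Since $h$ is bounded by one with $h(0)=b_1=c_1/2$, the normalized remainder
\[ h_1(z)=\frac{1}{z}\,\frac{h(z)-b_1}{1-\overline{b_1}\,h(z)} \]
is again a self-map of $\mathbb{D}$ into $\overline{\mathbb{D}}$, by Schwarz's lemma applied to the M\"obius transform of $h$. Setting $x:=h_1(0)$ then forces $|x|\le 1$ and $b_2=(1-|b_1|^2)x$. Applying the same reasoning to $h_1$, which is bounded by one with $h_1(0)=x$, its coefficient of $z$ has modulus at most $1-|x|^2$ and hence equals $(1-|x|^2)z$ for some $z$ with $|z|\le 1$; matching this with the coefficient of $z$ in the expansion of $h_1$ and solving gives $b_3=(1-|b_1|^2)[(1-|x|^2)z-\overline{b_1}x^2]$. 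These two identities already encode the constraints $|x|\le 1$, $|z|\le 1$ claimed in the lemma.

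Finally I would substitute $b_1=c_1/2$ (real, so $\overline{b_1}=c_1/2$ and $1-|b_1|^2=(4-c_1^2)/4$) together with the formulas for $b_2,b_3$ into $c_2=2b_2+2b_1^2$ and $c_3=2b_3+4b_1b_2+2b_1^3$. The first substitution collapses at once to $2c_2=c_1^2+(4-c_1^2)x$, which is \eqref{eqc2}; the second, after clearing denominators and multiplying through by $4$, yields $4c_3=c_1^3+2(4-c_1^2)c_1x-c_1(4-c_1^2)x^2+2(4-c_1^2)(1-|x|^2)z$, which is \eqref{eqc3}.

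I expect the main obstacle to lie in the second stage: verifying at each step that the M\"obius-transformed remainder is still bounded by one, so that $x$ and $z$ genuinely lie in $\overline{\mathbb{D}}$, and correctly tracking the factor $(1-|x|^2)$, which is precisely the Schwarz--Pick defect at the second level of the algorithm and the source of the delicate last term in \eqref{eqc3}. The algebra of the final paragraph is then entirely routine.
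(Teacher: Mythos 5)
Your argument is correct: the correspondence $c_1=2b_1$, $c_2=2(b_2+b_1^2)$, $c_3=2(b_3+2b_1b_2+b_1^3)$ is right, the Schur step gives $b_2=(1-|b_1|^2)x$ with $|x|\le 1$, the Schwarz--Pick bound $|h_1'(0)|\le 1-|h_1(0)|^2$ yields $b_3=(1-|b_1|^2)\bigl[(1-|x|^2)z-\overline{b_1}x^2\bigr]$ with $|z|\le 1$, and the final substitutions (with $b_1=c_1/2$ real) do reproduce \eqref{eqc2} and \eqref{eqc3} exactly; the degenerate case $|b_1|=1$ is harmless since then $4-c_1^2=0$ and both identities hold for any $x,z$. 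The paper, however, gives no proof at all --- the lemma is simply quoted from Grenander and Szeg\H{o}, and the classical derivation (as in Libera--Zl\'otkiewicz) proceeds quite differently, by writing out the nonnegativity of the second- and third-order Toeplitz determinants built from the $c_k$ (the Carath\'eodory--Toeplitz positivity conditions) and solving those inequalities for $c_2$ and $c_3$. Your Schur-algorithm route is therefore a genuinely different and arguably more self-contained argument: it trades determinant manipulations for two applications of the Schwarz lemma and makes transparent where the factors $4-c_1^2$, $1-|x|^2$ come from (they are successive Schwarz--Pick defects). One point worth making explicit: as stated, with $c_1^2$ rather than $|c_1|^2$ in the second terms, the identities require $c_1$ real, so your opening normalization $c_1\in[0,2]$ is not optional but essential --- and it is consistent with how the paper invokes the lemma, since the authors separately justify assuming $c_1=c\in[0,2]$ before substituting \eqref{eqc2} and \eqref{eqc3}.
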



\section{Second Hankel determinant of Ma-Minda starlike/convex functions}

Various subclasses of starlike functions  are characterized by the quantity $zf'(z)/f(z)$ lying in some domain in the right half-plane. For example, $f$ is strongly  starlike of order $\beta$ if $zf'(z)/f(z)$  lies in a sector $|\arg w|<\beta \pi/2$ while it is starlike of order $\alpha$ if $zf'(z)/f(z)$ lies in the half-plane $\RE w>\alpha$. The various  subclasses of starlike functions were unified by subordination in \cite{mamin}.
The following definition of the class of Ma-Minda starlike functions is the same as the one in \cite{mamin} except for the omission of starlikeness assumption of $\varphi$.

\begin{definition}
Let $\varphi:\mathbb{D}\rightarrow \mathbb{C}$ be analytic and  the Maclaurin series  of    $\varphi $ is given by  \begin{equation}
  \label{varphi} \varphi(z)= 1 + B_1 z + B_2 z^2 + B_3 z^3 + \dotsb , \quad (B_1, B_2\in \mathbb{R},\ B_1 >0). \end{equation}
 The class $\mathcal{S^*}(\varphi)$ of \emph{Ma-Minda starlike functions with respect to $\varphi$} consists of functions $f\in\mathcal{A} $ satisfying the subordination \[\frac{zf'(z)}{ f(z)}\prec
\varphi( z).\]
\end{definition}

  For the function $\varphi$ given by $\varphi_\alpha(z):=(1+(1-2\alpha)z)/(1-z)$ , $0<\alpha\leq1$, the class $\mathcal{S^{*}}(\alpha):=\mathcal{S^*}\left(\varphi_\alpha\right)$ is the well-known class of starlike functions of order $\alpha$.
 Let \[ \varphi_{PAR}(z):=1+\frac{2}{\pi^2}\left(\log \frac{1+\sqrt{z}}{1-\sqrt{z}}\right)^2.\] Then the class
\[ \mathcal{S}^*_P:=\mathcal{S^*}(\varphi_{PAR})= \left\{f\in\mathcal{A}:\RE\left(\frac{zf'(z)}{f(z)}\right)> \left| \frac{zf'(z)}{f(z)} -1\right|\right\}  \] is the \emph{parabolic starlike} functions introduced by R\o nning \cite{ronn}. For a survey of parabolic starlike functions and the related class of uniformly convex functions, see \cite{ucv}. For $0<\beta\leq1$, the class
\[ \mathcal{S^{*}_{\beta}}:=\mathcal{S^*}\left(\Big(\frac{1+z}{1-z}\Big)^\beta\right)  =
  \left\{f\in\mathcal{A}: \left|\arg\left(\frac{zf'(z)}{f(z)}\right) \right|<\frac{\beta \pi}{2} \right\}\]
  is the familiar class of \emph{strongly starlike functions of order $\beta$}. The class
\[ \mathcal{S}^*_L:=\mathcal{S^*}(\sqrt{1+z})=\left\{f\in\mathcal{A}: \left|\left(\frac{zf'(z)}{f(z)}\right)^2-1\right|<1\right\}\]
is the class of \emph{lemniscate starlike} functions studied in \cite{sokol}.

\begin{theorem}\label{thhd2}
 Let the function $f \in \mathcal{S}^{*}(\varphi)$ be given by \eqref{eqf}.
 \begin{enumerate}
   \item   If $B_1$, $B_2$ and $B_3$ satisfy the conditions
\[ |B_2|\leq B_1,\quad 4B_1^4 -16B_1|B_3|+12B_2^2-6B_1|B_2|+9B_1^2\geq 0,\]  then  the second Hankel determinant satisfies
 \[|a_2 a_4 -a_3^2|\leq \frac{B_1^2}{4}.\]
   \item  If $B_1$, $B_2$ and $B_3$ satisfy the conditions \[|B_2|\geq B_1,\quad 4B_1^4 -16B_1|B_3|+12B_2^2-2B_1|B_2|+5B_1^2\leq 0,\]  or the conditions \[ |B_2|\leq B_1,\quad 4B_1^4 -16B_1|B_3|+12B_2^2-6B_1|B_2|+9B_1^2\leq 0,\]  then  the second Hankel determinant satisfies
 \[|a_2 a_4 -a_3^2|\leq \frac{1}{48}(-4B_1^4+16B_1|B_3|-12{B_2^2}+6B_1|B_2|+3B_1^2).\]
   \item  If $B_1$, $B_2$ and $B_3$ satisfy the conditions \[ |B_2|> B_1,\quad 4B_1^4 -16B_1|B_3|+12B_2^2-2B_1|B_2|+5B_1^2\geq 0,\] then  the second Hankel determinant satisfies
   \[|a_2 a_4 -a_3^2|\leq \frac{B_1^2}{12}\left(\frac{12B_1^4-48B_1|B_3|+40B_2^2-2B_1|B_2|+7B_1^2}{4B_1^4-16B_1|B_3|+12B_2^2+2B_1|B_2|+B_1^2}\right).\]
 \end{enumerate}
\end{theorem}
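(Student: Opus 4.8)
The plan is to realize the subordination $zf'(z)/f(z)\prec\varphi(z)$ through a function with positive real part. Writing $zf'(z)/f(z)=\varphi(w(z))$ for a Schwarz function $w$, I would set $p(z)=(1+w(z))/(1-w(z))=1+c_1z+c_2z^2+\dotsb\in\mathcal P$, so that $w=(p-1)/(p+1)$. Expanding $\varphi(w)$ and comparing with the Maclaurin coefficients of $zf'(z)/f(z)=1+a_2z+(2a_3-a_2^2)z^2+(3a_4-3a_2a_3+a_2^3)z^3+\dotsb$, I would solve successively for $a_2$, $a_3$, $a_4$ as polynomials in $c_1,c_2,c_3$ with coefficients built from $B_1,B_2,B_3$. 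The first two read $a_2=B_1c_1/2$ and $a_3=\tfrac14 B_1c_2+\tfrac18(B_1^2-B_1+B_2)c_1^2$, and $a_4$ comes out similarly, carrying the $c_3$ term.

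Substituting these into $a_2a_4-a_3^2$ gives a polynomial in $c_1,c_2,c_3$. Since $\mathcal{S}^*(\varphi)$ and the quantity $|a_2a_4-a_3^2|$ are invariant under the rotation $f(z)\mapsto e^{-i\theta}f(e^{i\theta}z)$, I may, together with Lemma 1.1, assume $c_1=:c\in[0,2]$. The next step is to eliminate $c_2$ and $c_3$ using Lemma 1.2: write $2c_2=c^2+(4-c^2)x$ and the corresponding expression for $4c_3$, with $|x|\le1$ and an auxiliary $|z|\le1$. After collecting terms, the dependence on $z$ is linear, so applying the triangle inequality with $|z|\le1$ and writing $\mu=|x|\in[0,1]$ yields an upper bound $|a_2a_4-a_3^2|\le F(c,\mu)$, where $F$ is a polynomial in $c$ and $\mu$ whose coefficients involve $B_1,|B_2|,|B_3|$ (the absolute values appearing once the phases of $x$ and of $B_3$ are bounded).

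It then remains to maximize $F$ over the rectangle $[0,2]\times[0,1]$. I would examine $\partial F/\partial\mu$ and $\partial F/\partial c$: the sign of the coefficient of the top power of $\mu$ controls whether the maximum in $\mu$ is attained at $\mu=1$ or in the interior, while evaluating along $c=0$ gives the candidate value $B_1^2/4$ (achieved at $c=0$, $\mu=1$, accounting for part (1)), and an interior critical point in $c$ produces the rational expression of part (3). Matching these regimes to the sign of the quantities $4B_1^4-16B_1|B_3|+12B_2^2-6B_1|B_2|+9B_1^2$ and $4B_1^4-16B_1|B_3|+12B_2^2-2B_1|B_2|+5B_1^2$, together with whether $|B_2|\le B_1$ or $|B_2|\ge B_1$, should reproduce precisely the three sets of hypotheses (1)--(3) and their stated bounds.

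The main obstacle is the two-variable maximization and its bookkeeping: deciding, as $B_1,B_2,B_3$ vary, whether $F$ peaks on the edge $\mu=1$, at the corner $c=0$, or at an interior critical point, and then verifying that the stated inequalities on the $B_i$ delineate exactly these regions. Keeping track of the several sign conditions, and confirming that the candidate extremal values are genuinely the global maxima on the closed rectangle rather than local ones, is where the care is needed; the coefficient algebra leading to $F$, though lengthy, is routine.
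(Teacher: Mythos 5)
Your plan follows the paper's proof essentially step for step: the same $\mathcal{P}$-function parametrization of the Schwarz function, the same coefficient formulas for $a_2,a_3,a_4$, the rotation normalization $c_1=c\in[0,2]$, elimination of $c_2,c_3$ via Lemma 1.2, the triangle inequality giving $F(c,\mu)$, monotonicity in $\mu$ forcing $\mu=1$, and a final case analysis whose three regimes are exactly parts (1)--(3). The only refinement in the paper worth noting is that $G(c)=F(c,1)$ is a quadratic in $t=c^2$ on $[0,4]$, so the trichotomy (max at $t=0$, at $t=4$, or at the interior vertex) is read off from a standard closed-form rather than from a general critical-point search.
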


\begin{proof}
Since $f\in \mathcal{S}^{*}(\varphi)$,  there exists an analytic function $w$ with $w(0)=0$ and $|w(z)|<1$ in $\mathbb{D}$ such that
\begin{equation}\label{eqth2} \frac{zf'(z)}{ f(z)}= \varphi(w(z)).\end{equation}
Define the functions $p_1$ by
  \begin{equation*}
    \label{eqpu} p_1(z): = \frac{1 + w(z)}{1 - w(z)}=1 + c_1 z + c_2 z^2 +
     \cdots  \end{equation*} or equivalently,
  \begin{equation}
    \label{eqw} w(z)= \frac{p_1(z)- 1}{p_1(z)+ 1}=\frac{1}{2} \left( c_1 z
     + \left(c_2 - \frac{c_1^2}{2} \right)z^2 + \cdots \right).\end{equation}
Then $p_1$ is analytic in $\mathbb{D}$ with $p_1(0)=1$ and has positive real part in $\mathbb{D}$.
By using \eqref{eqw} together with {\eqref{varphi}},
it is evident that
  \begin{equation}
    \label{var1} \varphi \left(\frac{p_1(z)- 1}{p_1(z)+ 1} \right)=
    1 + \frac{1}{2} B_1 c_1 z + \left( \frac{1}{2} B_1 \left(c_2 -
    \frac{c_1^2}{2} \right)+ \frac{1}{4} B_2 c_1^2 \right) z^2 + \cdots.
  \end{equation}
Since
\begin{equation}\label{exps}\frac{zf'(z)}{f(z)}=1+a_2z+(-a_2^2+2a_3)z^2+(3a_4-3a_2a_3+a_2^3)z^3+\cdots,\end{equation}
it follows by \eqref{eqth2}, \eqref{var1}   and \eqref{exps} that
\begin{align*} a_2&=\frac{ B_1 c_1}{2},\\
a_3&=\frac{1}{8}\left[(B_1^2-B_1+B_2)c_1^2+2B_1c_2 \right],\\
a_4&=\frac{1}{48}[(-4B_2+2B_1+B_1^3-3B_1^2+3B_1B_2+2B_3)c_1^3+2(3B_1^2-4B_1+4B_2)c_1 c_2+8B_1 c_3].
\end{align*}
Therefore
\begin{align*}a_2a_4-a_3^2&=\frac{B_1}{96}\left[c_1^4\left(-\frac{B_1^3}{2}+\frac{B_1}{2}-B_2+2B_3-\frac{3B_2^2}{2B_1}\right)+2c_2c_1^2(B_2-B_1)+8B_1c_1c_3-6B_1c_2^2\right]. \end{align*}
Let
\begin{equation}\label{eqds2}
  \begin{array}{ll}
   d_1=8B_1, \quad d_2=2(B_2-B_1), \\[10pt]
   d_3=-6B_1, \quad d_4=-\frac{B_1^3}{2}+\frac{B_1}{2}-B_2+2B_3-\frac{3B_2^2}{2B_1},\\[10pt]
   T=\frac{B_1}{96}.
  \end{array}
\end{equation} Then
\begin{equation}\label{eqd2}
|a_2a_4-a_3^2|=T|d_1c_1c_3+d_2c_1^2c_2+d_3c_2^2+d_4c_1^4 |.
\end{equation}
Since the function  $p(e^{i\theta}z)$ $(\theta\in\mathbb{R})$ is in the class $\mathcal{P}$ for any $p\in\mathcal{P}$, there is no loss of generality in assuming $c_1>0$. Write $c_1=c$, $c\in[0,2]$. Substituting the values of $c_2$ and $c_3$ respectively from \eqref{eqc2} and \eqref{eqc3} in \eqref{eqd2}, it follows that
\begin{align*}
|a_2a_4-a_3^2|&=\frac{T}{4}\left|c^4(d_1+2d_2+d_3+4d_4)+2xc^2(4-c^2)(d_1+d_2+d_3)\right.\\&\quad \left.+(4-c^2)x^2(-d_1c^2+d_3(4-c^2))+2d_1c(4-c^2)(1-|x|^2)z\right|.
\end{align*}
Replacing  $|x|$ by $\mu$ and substituting the values of $d_1$, $d_2$, $d_3$ and $d_4$ from \eqref{eqds2}, yield
\begin{align}
|a_2a_4-a_3^2|&\leq\frac{T}{4}\Big[c^4\left(-2B_1^3+8|B_3|-6\frac{B_2^2}{B_1}\right)+4|{B_2}|\mu c^2(4-c^2)\notag\\
&\quad +\mu^2(4-c^2)(2B_1c^2+24B_1)+16B_1c(4-c^2)(1-\mu^2)\Big]\notag\\
&=T\Big[\frac{c^4}{4}\left(-2B_1^3+8|B_3|-6\frac{B_2^2}{B_1}\right)+4B_1c(4-c^2)+|{B_2}|(4-c^2)\mu c^2\label{eqF2}\\
&\quad +\frac{B_1}{2}\mu^2(4-c^2)(c-6)(c-2)\Big]\notag\\&\equiv F(c,\mu).\notag
\end{align}
Note that for $(c,\mu) \in [0,2] \times [0,1]$, differentiating $F(c,\mu)$ in \eqref{eqF2} partially with respect to $\mu$ yields
\begin{equation}\label{eqF2'}
\frac{\partial F}{\partial \mu}=T\left[|B_2|(4-c^2)+B_1\mu(4-c^2)(c-2)(c-6)\right].
\end{equation}
Then for $0<\mu<1$ and for any fixed $c$ with $0<c<2$, it is clear from \eqref{eqF2'} that $\frac{\partial F}{\partial \mu}>0$, that is, $F(c,\mu)$ is an increasing function of $\mu$. Hence for fixed $c\in [0,2]$, the maximum of $F(c,\mu)$ occurs at $\mu=1$, and
\[\max F(c,\mu)=F(c,1)\equiv G(c).\]
Also note that
\begin{align*}G(c)&=\frac{B_1}{96}\left[ \frac{c^4}{4}\left(-2B_1^3+8|B_3|-6\frac{B_2^2}{B_1}-|B_2|-\frac{B_1}{2}\right)+4c^2(|B_2|-B_1)+24B_1\right].\end{align*}
Let \begin{align}P&=\frac{1}{4}\left(-2B_1^3+8|B_3|-6\frac{B_2^2}{B_1}-|B_2|-\frac{B_1}{2}\right),\notag\\
Q&=4(|B_2|-B_1),\label{eqxyz2}\\
R&=24B_1.\notag\end{align} Since
\begin{equation}\label{ma}\max_{0\leq t\leq4} (Pt^2+Qt+R)=\left\{
    \begin{array}{ll}
      R, & \hbox{$Q\leq0$, $P\leq-\frac{Q}{4}$;} \\[10pt]
      16P+4Q+R, & \hbox{$Q\geq0$, $P\geq-\frac{Q}{8}$ or $Q\leq0$, $P\geq-\frac{Q}{4}$;} \\[10pt]
      \frac{4PR-Q^2}{4P}, & \hbox{$Q>0$, $P\leq-\frac{Q}{8}$,}
    \end{array}
  \right.
\end{equation}
we have
\[|a_2a_4-a_3^2|\leq \frac{B_1}{96}\left\{
    \begin{array}{ll}
      R, & \hbox{$Q\leq0$, $P\leq-\frac{Q}{4}$;} \\[10pt]
      16P+4Q+R, & \hbox{$Q\geq0$, $P\geq-\frac{Q}{8}$ or $Q\leq0$, $P\geq-\frac{Q}{4}$;} \\[10pt]
      \frac{4PR-Q^2}{4P}, & \hbox{$Q>0$, $P\leq-\frac{Q}{8}$}
    \end{array}
  \right.\]
 where $P,Q,R$ are given by \eqref{eqxyz2}.
\end{proof}

\begin{remark}
When $B_1=B_2=B_3=2$, Theorem \ref{thhd2} reduces to \cite[Theorem 3.1]{janteng}.
\end{remark}

\begin{corollary}
\begin{enumerate}\item[]
\item If $f\in\mathcal{S^{*}}(\alpha)$, for $0<\alpha\leq3/4$,  $|a_2a_4-a_3^2|\leq (1-\alpha)^2$. And for $3/4\leq\alpha\leq1$, $|a_2a_4-a_3^2|\leq (1-\alpha)^2[13-16(1-\alpha)^2]/12$.

  \item  If $f\in \mathcal{S}^*_L$, then $|a_2 a_4-a_3^2|\leq {1}/{16}=0.0625$.

 \item   If $f\in \mathcal{S}^*_P$, then $|a_2 a_4-a_3^2|\leq {16}/{\pi^4}\approx 0.164255$.

 \item  If $f\in\mathcal{S^{*}_{\beta}}$, then $|a_2a_4-a_3^2|\leq \beta^2$.
\end{enumerate}
\end{corollary}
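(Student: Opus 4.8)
The plan is to obtain each of the four assertions by specializing Theorem~\ref{thhd2} to the appropriate Ma-Minda function $\varphi$. For each class I would first read off the three leading Maclaurin coefficients $B_1,B_2,B_3$ of $\varphi$, then test the sign conditions listed in the three cases of the theorem to decide which branch of the piecewise bound is active, and finally substitute into the corresponding formula.

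For starlike functions of order $\alpha$, expanding $\varphi_\alpha(z)=(1+(1-2\alpha)z)/(1-z)$ gives $B_1=B_2=B_3=2(1-\alpha)$. Writing $b=1-\alpha$, the quantity $4B_1^4-16B_1|B_3|+12B_2^2-6B_1|B_2|+9B_1^2$ collapses to $4b^2(16b^2-1)$, which is nonnegative precisely when $b\geq 1/4$, i.e.\ $\alpha\leq 3/4$. This is exactly the dividing line between case~(1) and case~(2): on the first range the bound $B_1^2/4$ gives $(1-\alpha)^2$, while on the second the case~(2) expression simplifies (via $-4B_1^4+16B_1|B_3|-12B_2^2+6B_1|B_2|+3B_1^2=4b^2(13-16b^2)$) to $(1-\alpha)^2[13-16(1-\alpha)^2]/12$. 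I would confirm the two branches meet at $\alpha=3/4$, both equalling $1/16$.

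For the remaining three classes I expect case~(1) to be the operative branch, so the work reduces to computing $B_1$ and checking $|B_2|\leq B_1$ together with $4B_1^4-16B_1|B_3|+12B_2^2-6B_1|B_2|+9B_1^2\geq 0$. The binomial series of $\sqrt{1+z}$ yields $(B_1,B_2,B_3)=(1/2,-1/8,1/16)$, whence $B_1^2/4=1/16$. For strongly starlike functions, writing $((1+z)/(1-z))^\beta=\exp(2\beta(z+z^3/3+\cdots))$ and expanding produces $B_1=2\beta$, $B_2=2\beta^2$, $B_3=2\beta(1+2\beta^2)/3$, giving the bound $\beta^2$. For the parabolic class I would expand $\log((1+\sqrt{z})/(1-\sqrt{z}))=2(\sqrt{z}+z^{3/2}/3+z^{5/2}/5+\cdots)$, square it, and multiply by $2/\pi^2$ to obtain $B_1=8/\pi^2$, $B_2=16/(3\pi^2)$, $B_3=184/(45\pi^2)$, so that $B_1^2/4=16/\pi^4$.

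The main obstacle is confirming that case~(1) really does apply in the strongly starlike and parabolic cases, since the relevant inequality is not transparent. For strongly starlike functions the quantity factors as $\tfrac{4}{3}\beta^2(52\beta^2-18\beta+11)$, and I would deduce positivity for all $\beta$ from the negative discriminant $324-2288<0$ of the quadratic factor. For the parabolic case the same quantity reduces, after clearing denominators, to a sum of manifestly positive multiples of $1/\pi^2$ and $1/\pi^4$. The squaring of the logarithmic series in $\sqrt{z}$ is where arithmetic errors are most likely, so I would cross-check the computed $B_1=8/\pi^2$ against the known first coefficient of $\varphi_{PAR}$ before trusting the higher coefficients $B_2,B_3$.
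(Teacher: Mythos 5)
Your proposal is correct and follows exactly the route the paper intends: the corollary is stated without proof as a direct specialization of Theorem~\ref{thhd2}, and your coefficients $(B_1,B_2,B_3)$ for each $\varphi$, the identification of the active case (case~(1) throughout, except case~(2) for $\mathcal{S}^*(\alpha)$ with $3/4\leq\alpha\leq 1$ via the factorization $4b^2(16b^2-1)$), and the resulting bounds all check out. The only nitpick is in the parabolic case: the expression $4B_1^4-16B_1|B_3|+12B_2^2-6B_1|B_2|+9B_1^2$ equals $16384/\pi^8+6208/(45\pi^4)$, whose individual summands before combining are not all positive, so positivity requires the small arithmetic you allude to rather than being entirely manifest.
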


\begin{definition} Let $\varphi:\mathbb{D}\rightarrow \mathbb{C}$ be analytic and $\varphi(z)$ is given as in \eqref{varphi}. The class $\mathcal{C}(\varphi)$ of \emph{Ma-Minda convex functions with respect to $\varphi$} consists of functions $f $ satisfying the subordination \[1 +
\frac{zf''(z)}{f'(z)}\prec \varphi(z).\]\end{definition}

\begin{theorem}\label{thhd3}
 Let the function $f \in  \mathcal{C}(\varphi)$ be given by \eqref{eqf}.
 \begin{enumerate}
   \item   If $B_1$, $B_2$ and $B_3$ satisfy the conditions
   \[B_1^2+4|B_2|-2B_1\leq 0,\quad B_1^4-B_1^2|B_2| -6B_1|B_3|+4B_2^2+4B_1^2\geq 0,\] then the second Hankel determinant satisfies
 \[|a_2 a_4 -a_3^2|\leq \frac{B_1^2}{36}.\]

 \item
 If $B_1$, $B_2$ and $B_3$ satisfy the conditions \[B_1^2+4|B_2|-2B_1\geq 0,\quad 2B_1^4-2B_1^2|B_2| -12B_1|B_3|+8B_2^2+4B_1|B_2|+B_1^3+6B_1^2\leq 0,\]  or the conditions \[B_1^2+4|B_2|-2B_1\leq 0,\quad  B_1^4-B_1^2|B_2| -6B_1|B_3|+4B_2^2+4B_1^2\leq 0,\]  then the second Hankel determinant satisfies
 \[|a_2 a_4 -a_3^2|\leq \frac{1}{144}(-B_1^4+B_1^2|B_2|+6B_1|B_3|-4{B_2^2}).\]
 \item
 If $B_1$, $B_2$ and $B_3$ satisfy the conditions \[ B_1^2+4|B_2|-2B_1> 0,\quad 2B_1^4-2B_1^2|B_2| -12B_1|B_3|+8B_2^2+4B_1|B_2|+B_1^3+6B_1^2\geq 0,\] then the second Hankel determinant satisfies
  \[|a_2 a_4 -a_3^2|\leq  \frac{B_1^2}{576}\left(\frac{17B_1^4-8B_1^2|B_2|-96B_1|B_3|+80B_2^2+12B_1^3+48B_1|B_2|+36B_1^2}{B_1^4-B_1^2|B_2|-6B_1|B_3|+4B_2^2+B_1^3+4B_1|B_2|+2B_1^2}\right).\]
\end{enumerate}
\end{theorem}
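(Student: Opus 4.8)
The plan is to follow the structure of the proof of Theorem \ref{thhd2} line for line; the only change is that the defining subordination now involves $1+zf''(z)/f'(z)$ instead of $zf'(z)/f(z)$, which modifies the coefficient relations while leaving the analytic machinery intact. Since $f\in\mathcal{C}(\varphi)$, there is a Schwarz function $w$ with $1+zf''(z)/f'(z)=\varphi(w(z))$; introducing $p_1(z)=(1+w(z))/(1-w(z))=1+c_1z+c_2z^2+\cdots\in\mathcal{P}$ exactly as in \eqref{eqw}, the composite $\varphi((p_1-1)/(p_1+1))$ still admits the expansion \eqref{var1}, so all of that preparatory work carries over unchanged.

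First I would use the expansion
\[1+\frac{zf''(z)}{f'(z)}=1+2a_2z+(6a_3-4a_2^2)z^2+(12a_4-18a_2a_3+8a_2^3)z^3+\cdots\]
and match coefficients against \eqref{var1}. This yields $a_2=B_1c_1/4$, $a_3=\frac{1}{24}\left[(B_1^2-B_1+B_2)c_1^2+2B_1c_2\right]$, and a corresponding formula for $a_4$ obtained from the $z^3$ coefficient, which involves $B_1,B_2,B_3$. Substituting these into $a_2a_4-a_3^2$ and collecting terms puts the determinant into the same shape as \eqref{eqd2}, namely $a_2a_4-a_3^2=T'\!\left(d_1'c_1c_3+d_2'c_1^2c_2+d_3'c_2^2+d_4'c_1^4\right)$ for new constants $d_i'$ and a new prefactor $T'$; the extra factors of $2$ and $3$ coming from convexity are precisely what shrink the starlike constant $T=B_1/96$ and ultimately produce $B_1^2/36$ in case (1) in place of $B_1^2/4$.

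Next, exactly as before, I would take $c_1=c\in[0,2]$ without loss of generality (the rotation argument justifying $c_1>0$ in the proof of Theorem \ref{thhd2} applies verbatim), substitute the representations \eqref{eqc2} and \eqref{eqc3} for $c_2,c_3$, apply the triangle inequality, and write $\mu=|x|$ to obtain a bound $|a_2a_4-a_3^2|\le F(c,\mu)$ on $[0,2]\times[0,1]$. The decisive monotonicity step is to verify $\partial F/\partial\mu\ge0$; granting this, the maximum over $\mu$ is attained at $\mu=1$, which reduces the problem to maximizing $G(c)=F(c,1)$, a quadratic $Pt^2+Qt+R$ in $t=c^2\in[0,4]$. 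Applying the extremal formula \eqref{ma} then splits the estimate into three regimes governed by the sign of $Q$ and the comparison of $P$ with $-Q/4$ and $-Q/8$, and the remaining work is to check that, after inserting the explicit $P,Q,R$, these regimes coincide with the three sets of inequalities in the statement and that the three values $R$, $16P+4Q+R$, and $(4PR-Q^2)/(4P)$ collapse to $B_1^2/36$, to $\frac{1}{144}(-B_1^4+B_1^2|B_2|+6B_1|B_3|-4B_2^2)$, and to the quotient in case (3).

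I expect the main obstacle to be the bookkeeping, in two places. First, obtaining the correct $a_4$ requires the $z^3$ coefficient of the composite $\varphi((p_1-1)/(p_1+1))$ as a polynomial in $B_1,B_2,B_3$ and in $c_1,c_2,c_3$, and then propagating it through $a_2a_4-a_3^2$ without sign error. Second, and more delicately, one must align the abstract case conditions ``$Q\le0$, $P\le-Q/4$'', etc., of \eqref{ma} with the concrete hypotheses such as $B_1^2+4|B_2|-2B_1\lessgtr0$: here $Q$ will be a positive multiple of $(B_1^2+4|B_2|-2B_1)$ after normalization, while the $P$-versus-$-Q/4,\,-Q/8$ comparisons encode the longer quartic inequalities, so matching the regimes exactly (including the boundary cases shared between them) is where the care lies. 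The monotonicity $\partial F/\partial\mu\ge0$ should, as in Theorem \ref{thhd2}, follow at once because the two surviving terms of the derivative are each nonnegative for $c\in[0,2]$.
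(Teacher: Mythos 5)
Your proposal follows the paper's own proof of Theorem \ref{thhd3} essentially verbatim: the same coefficient expansion of $1+zf''(z)/f'(z)$, the same identification of $a_2$, $a_3$, $a_4$ via \eqref{var1}, the same reduction to $T|d_1c_1c_3+d_2c_1^2c_2+d_3c_2^2+d_4c_1^4|$, the substitution of \eqref{eqc2}--\eqref{eqc3}, monotonicity in $\mu$, and the quadratic maximization \eqref{ma} in $t=c^2$. The plan is correct and matches the paper's argument, including the observation that both terms of $\partial F/\partial\mu$ are nonnegative here.
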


\begin{proof}
Since $f\in \mathcal{C}(\varphi)$,  there exists an analytic function $w$ with $w(0)=0$ and $|w(z)|<1$ in $\mathbb{D}$ such that
\begin{equation}\label{eqth3} 1+\frac{zf''(z)}{ f'(z)}= \varphi(w(z)).\end{equation}
Since
\begin{equation}\label{expc}1+\frac{zf''(z)}{f'(z)}=1+2a_2z+(-4a_2^2+6a_3)z^2+(8a_2^3-18a_2a_3+12a_4)z^3+\cdots,\end{equation}
equations \eqref{var1}, \eqref{eqth3}  and \eqref{expc} yield
\begin{align*} a_2&=\frac{ B_1 c_1}{4},\\
a_3&=\frac{1}{24}\left[(B_1^2-B_1+B_2)c_1^2+2B_1c_2 \right],\\
a_4&=\frac{1}{192}[(-4B_2+2B_1+B_1^3-3B_1^2+3B_1B_2+2B_3)c_1^3+2(3B_1^2-4B_1+4B_2)c_1 c_2+8B_1 c_3].
\end{align*}
Therefore
\begin{align*}a_2a_4-a_3^2&=\frac{B_1}{768}\left[c_1^4\left(-\frac{4}{3}B_2+\frac{2}{3}B_1 -\frac{1}{3}B_1^3-\frac{1}{3}B_1^2+\frac{1}{3}B_1B_2+2B_3-\frac{4}{3}\frac{B_2^2}{B_1}\right)\right.\\&\quad\left. +\frac{2}{3}c_2c_1^2(B_1^2-4B_1+4B_2)
+8B_1c_1c_3-\frac{16}{3}B_1c_2^2\right]. \end{align*}
By writing
\begin{equation}\label{eqds3}
  \begin{array}{ll}
   d_1=8B_1, \quad d_2=\frac{2}{3}(B_1^2-4B_1+4B_2), \\[10pt]
   d_3=-\frac{16}{3}B_1, \quad d_4=-\frac{4}{3}B_2+\frac{2}{3}B_1 -\frac{1}{3}B_1^3-\frac{1}{3}B_1^2+\frac{1}{3}B_1B_2+2B_3-\frac{4}{3}\frac{B_2^2}{B_1},\\[10pt]
   T=\frac{B_1}{768},
  \end{array}
\end{equation} we have
\begin{equation}\label{eqd3}
|a_2a_4-a_3^2|=T|d_1c_1c_3+d_2c_1^2c_2+d_3c_2^2+d_4c_1^4 |.
\end{equation}
Similar as in Theorems \ref{thhd2}, it follows from \eqref{eqc2} and \eqref{eqc3} that
\begin{align*}
|a_2a_4-a_3^2|&=\frac{T}{4}\left|c^4(d_1+2d_2+d_3+4d_4)+2xc^2(4-c^2)(d_1+d_2+d_3)\right.\\&\quad \left.+(4-c^2)x^2(-d_1c^2+d_3(4-c^2))+2d_1c(4-c^2)(1-|x|^2)z\right|.
\end{align*}
Replacing  $|x|$ by $\mu$ and then substituting the values of $d_1$, $d_2$, $d_3$ and $d_4$ from \eqref{eqds3} yield
\begin{align}
|a_2a_4-a_3^2|&\leq\frac{T}{4}\Big[c^4\left(-\frac{4}{3}B_1^3+\frac{4}{3}B_1 B_2+8B_3-\frac{16}{3}\frac{B_2^2}{B_1}\right)+2\mu c^2(4-c^2)\left(\frac{2}{3}B_1^2+\frac{8}{3}B_2\right)\notag\\
&\quad +\mu^2(4-c^2)\left(\frac{8}{3}B_1c^2+\frac{64}{3}B_1\right)+16B_1c(4-c^2)(1-\mu^2)\Big]\notag\\
&=T\Big[\frac{c^4}{3}\left(-B_1^3+B_1|B_2|+6|B_3|-4\frac{B_2^2}{B_1}\right)+4B_1c(4-c^2)+\frac{1}{3}\mu c^2(4-c^2)(B_1^2+4|{B_2}|)\label{eqF3}\\
&\quad +\frac{2B_1}{3}\mu^2(4-c^2)(c-4)(c-2)\Big]\notag\\&\equiv F(c,\mu).\notag
\end{align}
 Again, differentiating $F(c,\mu)$ in \eqref{eqF3} partially with respect to $\mu$ yield
\begin{equation}\label{eqF3'}
\frac{\partial F}{\partial \mu}=T\left[\frac{c^2}{3} (4-c^2)(B_1^2+4|{B_2}|)+\frac{4B_1}{3}\mu(4-c^2)(c-4)(c-2)\right].
\end{equation}
It is clear from \eqref{eqF3'} that $\frac{\partial F}{\partial \mu}>0$. Thus $F(c,\mu)$ is an increasing function of $\mu$ for $0<\mu<1$ and for any fixed $c$ with $0<c<2$. So the maximum of $F(c,\mu)$ occurs at $\mu=1$ and \[\max F(c,\mu)=F(c,1)\equiv G(c).\]
Note that
\begin{align*}G(c)&=T\left[ \frac{c^4}{3}\left(-B_1^3+B_1|B_2|+6|B_3|-4\frac{B_2^2}{B_1}-B_1^2-4|B_2|-2B_1\right)\right.\\&\quad \left.+\frac{4}{3}c^2(B_1^2+4|B_2|-2B_1) +\frac{64}{3}B_1\right].\end{align*}
Let \begin{align}P&=\frac{1}{3}\left(-B_1^3+B_1|B_2|+6|B_3|-4\frac{B_2^2}{B_1}-B_1^2-4|B_2|-2B_1\right),\notag\\
Q&=\frac{4}{3}(B_1^2+4|B_2|-2B_1),\label{eqxyz3}\\
R&=\frac{64}{3}B_1,\notag
\end{align} By using \eqref{ma},
we have
\[|a_2a_4-a_3^2|\leq \frac{B_1}{768}\left\{
    \begin{array}{ll}
      R, & \hbox{$Q\leq0$, $P\leq-\frac{Q}{4}$;} \\[10pt]
      16P+4Q+R, & \hbox{$Q\geq0$, $P\geq-\frac{Q}{8}$ or $Q\leq0$, $P\geq-\frac{Q}{4}$;} \\[10pt]
      \frac{4PR-Q^2}{4P}, & \hbox{$Q>0$, $P\leq-\frac{Q}{8}$}
    \end{array}
  \right.\]
where $P,Q,R$ are given in \eqref{eqxyz3}.
\end{proof}

\begin{remark}For the choice of $\varphi(z)=(1+z)/(1-z)$,    Theorem \ref{thhd3} reduces to \cite[Theorem 3.2]{janteng}.
\end{remark}

\section{Further results on the second Hankel determinant }

\begin{definition} Let $\varphi:\mathbb{D}\rightarrow \mathbb{C}$ be analytic and $\varphi(z)$ as given in \eqref{varphi}.
Let $0\leq\gamma\leq1$ and $\tau\in\mathbb{C}\setminus \{0\}$. A function $f\in \mathcal{A}$ is in the class $\mathcal{R}_{\gamma}^{\tau}(\varphi)$ if it satisfies the following subordination:
\[1+\frac{1}{\tau}(f'(z)+\gamma zf''(z)-1)\prec \varphi(z).\]
\end{definition}

\begin{theorem}\label{hd1}
 Let $0\leq\gamma\leq1$, $\tau\in\mathbb{C}\setminus \{0\}$ and the function $f$ as in \eqref{eqf} is in the class $\mathcal{R}_{\gamma}^{\tau}(\varphi)$. Also, let \[p=\frac{8}{9}\frac{(1+\gamma)(1+3\gamma)}{(1+2\gamma)^2}.\]
\begin{enumerate}
  \item If $B_1$, $B_2$ and $B_3$ satisfy the conditions \[ 2|B_2|(1-p)+B_1(1-2p)\leq 0,\quad |B_1B_3 -pB_2^2|-pB_1^2 \leq 0,\] then the second Hankel determinant satisfies
 \[|a_2 a_4 -a_3^2|\leq \frac{|\tau|^2 B_1^2}{9(1+2\gamma)^2}.\]
  \item  If $B_1$, $B_2$ and $B_3$ satisfy the conditions \[ 2|B_2|(1-p)+B_1(1-2p)\geq 0,\quad 2|B_1B_3-pB_2^2|-2(1-p)B_1|B_2|-B_1\geq0,\] or the conditions \[2|B_2|(1-p)+B_1(1-2p)\leq 0,\quad |B_1B_3 -pB_2^2|-B_1^2 \geq 0,\]  then the second Hankel determinant satisfies
 \[|a_2 a_4 -a_3^2|\leq \frac{|\tau|^2}{8(1+\gamma)(1+3\gamma)}|B_3B_1-pB_2^2|.\]
  \item  If $B_1$, $B_2$ and $B_3$ satisfy the conditions \[ 2|B_2|(1-p)+B_1(1-2p)> 0,\quad2|B_1B_3-pB_2^2|-2(1-p)B_1|B_2|-B_1^2\leq0,\] then the second Hankel determinant satisfies
  \begin{align*} |a_2 a_4 -a_3^2|&\leq  \frac{|\tau|^2 B_1^2}{32(1+\gamma)(1+3\gamma)}\left(\frac{\splitfrac{4p|B_3B_1-pB_2^2|-4(1-p)B_1[|B_2|(3-2p)+B_1]}{-4B_2^2(1-p)^2-B_1^2(1-2p)^2}}{|B_3B_1-pB_2^2|-(1-p)B_1(2|B_2|+B_1)}\right).\end{align*}
\end{enumerate}
\end{theorem}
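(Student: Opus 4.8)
The plan is to follow the same scheme used in Theorems \ref{thhd2} and \ref{thhd3}, adapted to the functional defining $\mathcal{R}_\gamma^\tau(\varphi)$. First I would introduce the Schwarz function $w$ with $1+\frac{1}{\tau}(f'(z)+\gamma zf''(z)-1)=\varphi(w(z))$, set $p_1=(1+w)/(1-w)=1+c_1z+\dotsb\in\mathcal{P}$ as in \eqref{eqw}, and use the expansion \eqref{var1} of $\varphi((p_1-1)/(p_1+1))$, carried one order further so that its $z^3$ coefficient is $B_1(\tfrac{c_3}{2}-\tfrac{c_1c_2}{2}+\tfrac{c_1^3}{8})+B_2(\tfrac{c_1c_2}{2}-\tfrac{c_1^3}{4})+B_3\tfrac{c_1^3}{8}$. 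Comparing with $f'(z)+\gamma zf''(z)-1=2(1+\gamma)a_2z+3(1+2\gamma)a_3z^2+4(1+3\gamma)a_4z^3+\dotsb$ then produces $a_2$, $a_3$, $a_4$; here the three factors $1+\gamma$, $1+2\gamma$, $1+3\gamma$ and the parameter $\tau$ enter the denominators.

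Next I would form $a_2a_4-a_3^2$. The key algebraic observation is that the denominator $16(1+\gamma)(1+3\gamma)$ arising from $a_2a_4$ and the denominator $9(1+2\gamma)^2$ arising from $a_3^2$ are related precisely through $p$, since $\tfrac{9(1+2\gamma)^2}{8(1+\gamma)(1+3\gamma)}=1/p$; this is what makes $p$ appear throughout and lets me write $a_2a_4-a_3^2=T(d_1c_1c_3+d_2c_1^2c_2+d_3c_2^2+d_4c_1^4)$ with $T=\tau^2B_1/[128(1+\gamma)(1+3\gamma)]$ and $d_1=4B_1$, $d_2=4(B_2-B_1)(1-p)$, $d_3=-4pB_1$, $d_4=(B_1-2B_2+B_3)-p(B_2-B_1)^2/B_1$. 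Substituting \eqref{eqc2} and \eqref{eqc3}, taking $c_1=c\in[0,2]$, replacing $|x|$ by $\mu$ and applying the triangle inequality with $|z|\le 1$ yields a bound $F(c,\mu)$ of the same shape as \eqref{eqF2} and \eqref{eqF3}, in which the combination $d_1+2d_2+d_3+4d_4$ collapses to $\tfrac{4}{B_1}(B_1B_3-pB_2^2)$ and $d_1+d_2+d_3$ collapses to $4(1-p)B_2$.

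The monotonicity step is where I expect the only genuine subtlety. Differentiating gives $\partial F/\partial\mu=2T(4-c^2)\{(1-p)|B_2|c^2+B_1\mu[(1-p)c^2-2c+4p]\}$, so to conclude that $F(c,\mu)$ is increasing in $\mu$ (hence maximal at $\mu=1$) I must verify that $(1-p)c^2-2c+4p\ge 0$ on $[0,2]$. Unlike in the previous two theorems, where the analogous bracket factors visibly into terms positive on $[0,2)$, here positivity rests on the quantitative fact that $p\ge 1/2$: since $\tfrac{(1+\gamma)(1+3\gamma)}{(1+2\gamma)^2}=1-\tfrac{\gamma^2}{(1+2\gamma)^2}\ge\tfrac{8}{9}$ for $\gamma\in[0,1]$, one has $p\ge\tfrac{64}{81}>\tfrac12$, which forces the vertex $c=1/(1-p)$ to lie at or beyond $2$ and the bracket to be nonnegative throughout $[0,2]$.

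Finally I would set $G(c)=F(c,1)$, express it as $\tfrac{T}{4}(Pt^2+Qt+R)$ in $t=c^2\in[0,4]$ with $R=64pB_1$, $Q=16[2(1-p)|B_2|+B_1(1-2p)]$ and $P=\tfrac{4}{B_1}|B_1B_3-pB_2^2|-8(1-p)|B_2|-4B_1(1-p)$, and invoke the elementary maximization \eqref{ma}. Its three cases map directly onto the three sets of hypotheses in the statement: $Q\le 0$ with $P\le -Q/4$ reduces, after clearing $B_1$, to $2|B_2|(1-p)+B_1(1-2p)\le 0$ together with $|B_1B_3-pB_2^2|\le pB_1^2$, and yields $\tfrac{T}{4}R=|\tau|^2B_1^2/[9(1+2\gamma)^2]$; the value $16P+4Q+R$ collapses at $t=4$ to $\tfrac{16T}{B_1}|B_1B_3-pB_2^2|$, giving the second bound; and $(4PR-Q^2)/(4P)$ gives the third. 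The remaining work is the bookkeeping of these substitutions, which is routine once the identity for $d_1+2d_2+d_3+4d_4$ and the inequality $p\ge 1/2$ are in hand.
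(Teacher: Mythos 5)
Your proposal is correct and follows essentially the same route as the paper's proof: the same Schwarz-function setup, the same coefficient formulas for $a_2,a_3,a_4$, the same reduction to $T|d_1c_1c_3+d_2c_1^2c_2+d_3c_2^2+d_4c_1^4|$ via \eqref{eqc2}--\eqref{eqc3} (your $d_i$ differ from the paper's only by a harmless factor of $B_1$ absorbed into $T$), the same monotonicity in $\mu$ resting on $p>1/2$ (the paper phrases it as $\beta=2p/(1-p)>2$), and the same quadratic maximization \eqref{ma}. The identities you single out, $d_1+2d_2+d_3+4d_4=\tfrac{4}{B_1}(B_1B_3-pB_2^2)$ and $d_1+d_2+d_3=4(1-p)B_2$, check out and match the paper's computation.
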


\begin{proof}
For $f\in \mathcal{R}_{\gamma}^{\tau}(\varphi)$,  there exists an analytic function $w$ with $w(0)=0$ and $|w(z)|<1$ in $\mathbb{D}$ such that
\begin{equation}\label{eqth1} 1+\frac{1}{\tau}(f'(z)+\gamma zf''(z)-1)= \varphi(w(z)).\end{equation}
Since $f$ has the  Maclaurin series given by
\eqref{eqf}, a computation shows that
\begin{equation}
\label{expr} 1+\frac{1}{\tau}(f'(z)+\gamma zf''(z)-1)=1+\frac{2a_2(1+\gamma)}{\tau}z+\frac{3a_3(1+2\gamma)}{\tau}z^2+\frac{4a_4(1+3\gamma)}{\tau}z^3+\cdots.
\end{equation}
It follows from \eqref{eqth1}, \eqref{var1} and \eqref{expr} that
\begin{align*} a_2&=\frac{\tau B_1c_1}{4(1+\gamma)},\\
a_3&=\frac{\tau B_1}{12(1+2\gamma)}\left[2c_2+c_1^2\left(\frac{B_2}{B_1}-1\right)\right],\\
a_4&=\frac{\tau}{32(1+3\gamma)}[B_1(4c_3-4c_1c_2+c_1^3)+2B_2c_1(2c_2-c_1^2)+B_3c_1^3].\end{align*}
Therefore
\begin{align*}a_2a_4-a_3^2&=\frac{\tau^2B_1c_1}{128(1+\gamma)(1+3\gamma)}\left[B_1(4c_3-4c_1c_2+c_1^3)+2B_2c_1(2c_2-c_1^2)+B_3c_1^3\right]
\\&\quad-\frac{\tau^2B_1^2}{144(1+2\gamma)^2}\left[4c_2^2+c_1^4\left(\frac{B_2}{B_1}-1\right)^2+4c_2c_1^2\left(\frac{B_2}{B_1}-1\right)\right]
\\&=\frac{\tau^2B_1^2}{128(1+\gamma)(1+3\gamma)}\left\{\left[(4c_1c_3-4c_1^2c_2+c_1^4)+\frac{2B_2c_1^2}{B_1}(2c_2-c_1^2)+\frac{B_3}{B_1}c_1^4\right]\right.
\\&\quad \left.-\frac{8}{9}\frac{(1+\gamma)(1+3\gamma)}{(1+2\gamma)^2}\left[4c_2^2+c_1^4\left(\frac{B_2}{B_1}-1\right)^2+4c_2c_1^2\left(\frac{B_2}{B_1}-1\right)\right]\right\}, \end{align*}
which yields
\begin{align}
|a_2a_4-a_3^2|&=T\left|4c_1c_3+c_1^4\left[1-2\frac{B_2}{B_1}-p\left(\frac{B_2}{B_1}-1\right)^2+\frac{B_3}{B_1}\right]-4p c_2^2 \right.\notag\\&\quad \left. -4c_1^2c_2\left[1-\frac{B_2}{B_1}+p\left(\frac{B_2}{B_1}-1\right)\right]\right|\label{eqpt},
\end{align}
where
\[T=\frac{|\tau|^2 B_1^2}{128(1+\gamma)(1+3\gamma)}\quad \text{and}\quad p=\frac{8}{9}\frac{(1+\gamma)(1+3\gamma)}{(1+2\gamma)^2}.\]
It can be easily verified that $p\in\left[\frac{64}{81},\frac{8}{9}\right]$ for $0\leq\gamma\leq1$.

Let
\begin{equation}\label{eqds}
  \begin{array}{ll}
   d_1=4, \quad d_2=-4\left[1-\frac{B_2}{B_1}+p\left(\frac{B_2}{B_1}-1\right)\right], \\[10pt]
   d_3=-4p, \quad d_4=1-2\frac{B_2}{B_1}-p\left(\frac{B_2}{B_1}-1\right)^2+\frac{B_3}{B_1}.
  \end{array}
\end{equation} Then \eqref{eqpt} becomes
\begin{equation}\label{eqd}
|a_2a_4-a_3^2|=T|d_1c_1c_3+d_2c_1^2c_2+d_3c_2^2+d_4c_1^4 |.
\end{equation}
It follows that
\begin{align*}
|a_2a_4-a_3^2|&=\frac{T}{4}\left|c^4(d_1+2d_2+d_3+4d_4)+2xc^2(4-c^2)(d_1+d_2+d_3)\right.\\&\quad \left.+(4-c^2)x^2(-d_1c^2+d_3(4-c^2))+2d_1c(4-c^2)(1-|x|^2)z\right|.
\end{align*}
An application of triangle inequality, replacement of $|x|$ by $\mu$ and substituting the values of $d_1$, $d_2$, $d_3$ and $d_4$ from \eqref{eqds} yield
\begin{align}
|a_2a_4-a_3^2|&\leq\frac{T}{4}\Big[4c^4\left|\frac{B_3}{B_1}-p\frac{B_2^2}{B_1^2}\right|+8\left|\frac{B_2}{B_1}\right|\mu c^2(4-c^2)(1-p)\notag\\&\quad +(4-c^2)\mu^2(4c^2+4p(4-c^2))+8c(4-c^2)(1-\mu^2)\Big]\notag\\
&=T\Big[c^4\left|\frac{B_3}{B_1}-p\frac{B_2^2}{B_1^2}\right|+2c(4-c^2)+2\mu\left|\frac{B_2}{B_1}\right|c^2(4-c^2)(1-p)\label{eqF}\\
&\quad +\mu^2(4-c^2)(1-p)(c-\alpha)(c-\beta)\Big]\notag\\&\equiv F(c,\mu)\notag
\end{align}where $\alpha=2$, $\beta=2p/(1-p)>2$.

Similarly as in the previous proofs, it can be shown that $F(c,\mu)$ is an increasing function of $\mu$ for $0 < \mu < 1$. So for
fixed $c\in [0,2]$, let
\[\max F(c,\mu)=F(c,1)\equiv G(c),\]
which is
\begin{align*}G(c)&=T\left\{ c^4\left[\left|\frac{B_3}{B_1}-p\frac{B_2^2}{B_1^2}\right|-(1-p)\left(2\left|\frac{B_2}{B_1}\right|+1\right)\right]\right.\\
&\quad \left.+4c^2\left[2\left|\frac{B_2}{B_1}\right|(1-p)+1-2p\right]+16p\right\}.\end{align*}
Let \begin{align}P&=\left|\frac{B_3}{B_1}-p\frac{B_2^2}{B_1^2}\right|-(1-p)\left(2\left|\frac{B_2}{B_1}\right|+1\right),\notag\\
Q&=4\left[2\left|\frac{B_2}{B_1}\right|(1-p)+1-2p\right],\label{eqxyz1}\\
R&=16p.\notag\end{align}
Using \eqref{ma}, we have
\[|a_2a_4-a_3^2|\leq T\left\{
    \begin{array}{ll}
      R, & \hbox{$Q\leq0$, $P\leq-\frac{Q}{4}$;} \\[10pt]
      16P+4Q+R, & \hbox{$Q\geq0$, $P\geq-\frac{Q}{8}$ or $Q\leq0$, $P\geq-\frac{Q}{4}$;} \\[10pt]
      \frac{4PR-Q^2}{4P}, & \hbox{$Q>0$, $P\leq-\frac{Q}{8}$}
    \end{array}
  \right.\]
where $P,Q,R$ are given in \eqref{eqxyz1}.
\end{proof}


\begin{remark}For the choice $\varphi(z):=(1+Az)/(1+Bz)$ with $-1\leq B<A\leq1$,  Theorem \ref{hd1} reduces to \cite[Theorem 2.1]{bansal}.
\end{remark}

\begin{definition} Let $\varphi:\mathbb{D}\rightarrow \mathbb{C}$ be analytic and $\varphi(z)$ as given in \eqref{varphi}.
For fixed real number $\alpha$, function $f\in \mathcal{A}$ is in the class $\mathcal{G}_{\alpha}(\varphi)$ if it satisfies the following subordination:
\[(1-\alpha)f'(z)+\alpha\left(1 +
\frac{zf''(z)}{f'(z)}\right)\prec \varphi(z).\]
\end{definition}

 Al-Amiri  and Reade \cite{alamiri} introduced  the class  $ \mathcal{G}_{\alpha}:=\mathcal{G}_{\alpha}((1+z)/(1-z))$ and they have shown that $\mathcal{G}_{\alpha}\subset \mathcal{S}$ for $\alpha<0$. Univalence of the functions in the class $\mathcal{G}_{\alpha}$ was  also investigated in \cite{ssingh,singh}.   Singh \emph{et al.} also obtained the bound for the second Hankel determinant of functions in $\mathcal{G}_{\alpha}$. The following theorem provides a bound for the second Hankel determinant of the functions in the class $\mathcal{G}_{\alpha}(\varphi)$.

\begin{theorem}\label{thhd4}
Let the function $f$ given by \eqref{eqf} be in the class $\mathcal{G}_{\alpha}(\varphi)$, $0\leq\alpha\leq1$. Also, let \[p=\frac{8}{9}\frac{(1+2\alpha)}{(1+\alpha)}.\]
\begin{enumerate}

  \item If $B_1$, $B_2$ and $B_3$ satisfy the conditions \[ B_1^2\alpha(3-2p)+2|B_2|(1+\alpha-p)+B_1(1+\alpha-2p)\leq 0,\] \[B_1^4\alpha(2\alpha-1-p\alpha)+\alpha B_1^2|B_2|(3-2p)+(\alpha+1)B_1|B_3|-p(B_1^2 +B_2^2)\leq 0,\] then  the second Hankel determinant satisfies
 \[|a_2 a_4 -a_3^2|\leq \frac{ B_1^2}{9(1+\alpha)^2}.\]

  \item  If $B_1$, $B_2$ and $B_3$ satisfy the conditions \[B_1^2\alpha(3-2p)+2|B_2|(1+\alpha-p)+B_1(1+\alpha-2p)\geq 0,\] \[2B_1^4\alpha(2\alpha-1-p\alpha)+2\alpha B_1^2|B_2|(3-2p)-B_1^3\alpha(3-2p)\]\[+2(\alpha+1)B_1|B_3|-2(1+\alpha -p)B_1|B_2|-(1+\alpha)B_1^2-2pB_2^2\geq0,\] or \[B_1^2\alpha(3-2p)+2|B_2|(1+\alpha-p)+B_1(1+\alpha-2p)\leq 0,\] \[B_1^4\alpha(2\alpha-1-p\alpha)+\alpha B_1^2|B_2|(3-2p)+(\alpha+1)B_1|B_3|-p(B_1^2 +B_2^2)\geq 0,\] then  the second Hankel determinant satisfies
 \begin{align*}&|a_2 a_4 -a_3^2| \leq \frac{ B_1^4\alpha(2\alpha-1-p\alpha)+\alpha B_1^2|B_2|(3-2p)+(\alpha+1)B_1|B_3|+p(B_2^2-B_1^2)}{8(1+\alpha)(1+2\alpha)}.\end{align*}

  \item If $B_1$, $B_2$ and $B_3$ satisfy the conditions \[B_1^2\alpha(3-2p)+2|B_2|(1+\alpha-p)+B_1(1+\alpha-2p)> 0,\]\[ 2B_1^4\alpha(2\alpha-1-p\alpha)+2\alpha B_1^2|B_2|(3-2p)-B_1^3\alpha(3-2p)\]\[+2(\alpha+1)B_1|B_3|-2(1+\alpha -p)B_1|B_2|-(1+\alpha)B_1^2-2pB_2^2\leq0,\] then   the second Hankel determinant satisfies
 \begin{align*}
& |a_2 a_4 -a_3^2|\leq \frac{ B_1^2}{32(1+\alpha)(1+2\alpha)}\left[4p -\frac{[B_1^2\alpha(3-2p)+2|B_2|(1+\alpha-p)+B_1(1+\alpha-2p)]^2}{ \splitfrac{B_1^4\alpha(2\alpha-1-p\alpha)+\alpha B_1^2|B_2|(3-2p)-B_1^3\alpha(3-2p)}{+(\alpha+1)B_1|B_3|-(1+\alpha-p)B_1(2|B_2|+1)-pB_2^2}}\right].
 \end{align*}
\end{enumerate}

\end{theorem}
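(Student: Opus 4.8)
The plan is to run the same machine used for Theorems \ref{thhd2}, \ref{thhd3} and \ref{hd1}, now for the operator $L_\alpha f(z) := (1-\alpha)f'(z) + \alpha\bigl(1 + zf''(z)/f'(z)\bigr)$. Writing the defining subordination as $L_\alpha f(z) = \varphi(w(z))$ and setting $p_1(z) = (1+w(z))/(1-w(z)) = 1 + c_1 z + c_2 z^2 + \cdots \in \mathcal{P}$, the right-hand side $\varphi((p_1-1)/(p_1+1))$ has the expansion already computed in \eqref{var1}. The one new input on the left is the Taylor expansion of $L_\alpha f$; combining $f'(z) = 1 + 2a_2 z + 3a_3 z^2 + 4a_4 z^3 + \cdots$ with the expansion of $1 + zf''(z)/f'(z)$ gives
\[ L_\alpha f(z) = 1 + 2a_2 z + \bigl(3(1+\alpha)a_3 - 4\alpha a_2^2\bigr)z^2 + \bigl(4(1+2\alpha)a_4 + 8\alpha a_2^3 - 18\alpha a_2 a_3\bigr)z^3 + \cdots. \]

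Matching this against \eqref{var1} and solving order by order, the coefficients of $z$ and $z^2$ yield $a_2 = B_1 c_1/4$ and $a_3 = \frac{B_1}{12(1+\alpha)}\bigl[2c_2 + (\alpha B_1 + B_2/B_1 - 1)c_1^2\bigr]$, while the coefficient of $z^3$ (after substituting $a_2, a_3$) determines $a_4 = \frac{B_1}{4(1+2\alpha)}\{\cdots\}$, whose bracket is a linear combination of $c_3, c_1 c_2, c_1^3$. I would then assemble $a_2 a_4 - a_3^2$ and pull out the common factor $T = B_1^2/[32(1+\alpha)(1+2\alpha)]$, rewriting it as $T|d_1 c_1 c_3 + d_2 c_1^2 c_2 + d_3 c_2^2 + d_4 c_1^4|$. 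The parameter $p = \frac{8}{9}\frac{1+2\alpha}{1+\alpha}$ arises from comparing the $(1+\alpha)^{-2}$-scaling of $a_3^2$ with the $(1+2\alpha)^{-1}$-scaling of $a_2 a_4$: after extracting $T$ one finds $d_1 = 1+\alpha$ and $d_3 = -p$, exactly paralleling the role of $p$ in the proof of Theorem \ref{hd1}.

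From here the argument is routine. I would substitute \eqref{eqc2} and \eqref{eqc3} for $c_2, c_3$, use the invariance of $\mathcal{P}$ under $p(z)\mapsto p(e^{i\theta}z)$ to take $c_1 = c \in [0,2]$, replace $|x|$ by $\mu \in [0,1]$ and bound $|z| \le 1$ by the triangle inequality, obtaining a majorant $F(c,\mu)$ that is quadratic in $\mu$. At $\mu = 1$ the odd-in-$c$ contribution $\tfrac12(1+\alpha)c(4-c^2)$ cancels, so $G(c) := F(c,1) = T(Pc^4 + Qc^2 + R)$ is even in $c$, with $R = 4p$ and $P, Q$ the resulting combinations of $B_1, B_2, B_3, \alpha, p$. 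Putting $t = c^2 \in [0,4]$ and applying the elementary maximization \eqref{ma} then produces exactly the three cases listed.

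The principal obstacle is the middle computation: because $L_\alpha$ mixes $f'$ with $1 + zf''/f'$, the formula for $a_4$ carries the nonlinear corrections $8\alpha a_2^3 - 18\alpha a_2 a_3$, and collecting $a_2 a_4 - a_3^2$ into the clean four-term shape with the correct $d_i$ and common factor $T$ is heavier than in the purely starlike or convex cases and must be done carefully. A second, subtler point is the monotonicity of $F$ in $\mu$: unlike in Theorem \ref{hd1}, here $p$ can exceed $1$ (indeed $p \in [\,8/9, 4/3\,]$ as $\alpha$ runs over $[0,1]$), so $\partial F/\partial\mu \ge 0$ cannot be inferred from a factorization with positive roots. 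Instead it rests on the observation that the coefficient of $\mu^2$ is $\tfrac{T}{4}(4-c^2)$ times $(1+\alpha)c(c-2) + p(4-c^2)$, and that this inner quadratic factors as $(2-c)\bigl[2p + (p-1-\alpha)c\bigr]$, which is nonnegative throughout $[0,2]$.
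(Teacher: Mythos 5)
Your proposal follows essentially the same route as the paper: compute $a_2,a_3,a_4$ from the subordination, reduce $a_2a_4-a_3^2$ to $T|d_1c_1c_3+d_2c_1^2c_2+d_3c_2^2+d_4c_1^4|$ (your normalization merely shifts a factor of $B_1$ from the $d_i$ into $T$, since $T d_i$ agrees with the paper's), substitute \eqref{eqc2}--\eqref{eqc3}, maximize in $\mu$ and then in $t=c^2$ via \eqref{ma}. Your expansions of $L_\alpha f$ and of $a_2,a_3$ check out, and your justification that the $\mu^2$-coefficient equals $\tfrac{T}{4}(4-c^2)(2-c)\bigl[2p+(p-1-\alpha)c\bigr]\geq 0$ (using $2p\geq 1+\alpha$ on $[0,1]$) correctly fills in the monotonicity step that the paper dismisses with ``similar as in earlier theorems.''
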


\begin{proof}
For  $f\in \mathcal{G}_{\alpha}(\varphi)$,  a calculation shows that

\begin{align}
|a_2a_4-a_3^2|&=T\Big|4(1+\alpha)B_1 c_1c_3+c_1^4\Big[-3\alpha B_1^2+\alpha(2\alpha-1)B_1^3+B_1(1+\alpha)+3\alpha B_1B_2\notag\\&\quad +(1+\alpha)(B_3-2B_2)-p\frac{(\alpha B_1^2-B_1+B_2)^2}{B_1}\Big]-4pB_1 c_2^2 \notag\\&\quad  +2c_1^2c_2\left[-2(1+\alpha)B_1+3\alpha B_1^2+2(1+\alpha)B_2-2p(\alpha B_1^2-B_1+B_2)\right]\Big|\label{eqpt4}
\end{align}
where
\[T=\frac{B_1}{128(1+\alpha)(1+2\alpha)}\quad \text{and}\quad p=\frac{8}{9}\frac{(1+2\alpha)}{(1+\alpha)}.\]
It can be easily verified that for $0\leq\alpha\leq1$, $p\in\left[\frac{8}{9},\frac{4}{3}\right]$.
Let
\begin{equation}\label{eqds4}
  \begin{array}{ll}
   d_1=4(1+\alpha)B_1, \\[10pt] d_2=2\left[-2(1+\alpha)B_1+3\alpha B_1^2+2(1+\alpha)B_2-2p(\alpha B_1^2-B_1+B_2)\right], \\[10pt]
   d_3=-4pB_1, \\[10pt] d_4=-3\alpha B_1^2+\alpha(2\alpha-1)B_1^3+B_1(1+\alpha)+3\alpha B_1B_2+(1+\alpha)(B_3-2B_2)-p\frac{(\alpha B_1^2-B_1+B_2)^2}{B_1},\\[10pt]
  \end{array}
\end{equation} Then
\begin{equation}\label{eqd4}
|a_2a_4-a_3^2|=T|d_1c_1c_3+d_2c_1^2c_2+d_3c_2^2+d_4c_1^4 |.
\end{equation}
Similar as in earlier theorems, it follows that
\begin{align}
|a_2a_4-a_3^2|&=\frac{T}{4}\left|c^4(d_1+2d_2+d_3+4d_4)+2xc^2(4-c^2)(d_1+d_2+d_3)\right.\\&\quad \left.+(4-c^2)x^2(-d_1c^2+d_3(4-c^2))+2d_1c(4-c^2)(1-|x|^2)z\right|\notag\\
&\leq T\Big[{c^4}\left[B_1^3\alpha(2\alpha-1-p\alpha)+\alpha B_1|B_2|(3-2p)+(\alpha+1)|B_3|-p\frac{B_2^2}{B_1}\right]\notag\\&\quad+\mu c^2(4-c^2)[B_1^2\alpha(3-2p)+2|B_2|(1+\alpha-p)]+2c(4-c^2)B_1(1+\alpha)\label{eqF4}\\
&\quad +\mu^2(4-c^2)B_1(1+\alpha-p)(c-2)\left(c-\frac{2p}{1+\alpha-p}\right)\Big]\notag\\&\equiv F(c,\mu),\notag
\end{align}
 and for fixed $c\in [0,2]$,
$\max F(c,\mu)=F(c,1)\equiv G(c)$ with
\begin{align*}G(c)&=T\Bigg[c^4 \Big[B_1^3\alpha(2\alpha-1-p\alpha)+\alpha B_1|B_2|(3-2p)-B_1^2\alpha(3-2p)+(\alpha+1)|B_3| \\&\quad-(1+\alpha-p)(2|B_2|+B_1)-p\frac{B_2^2}{B_1}\Big]+4c^2 [B_1^2\alpha(3-2p)+2|B_2|(1+\alpha-p)\\& \quad +B_1(1+\alpha-2p)]+16pB_1 \Bigg].\end{align*}
Let \begin{align}P&=B_1^3\alpha(2\alpha-1-p\alpha)+\alpha B_1|B_2|(3-2p)-B_1^2\alpha(3-2p)+(\alpha+1)|B_3|\notag\\&\quad\quad{}-(1+\alpha-p)(2|B_2|+B_1)-p\frac{B_2^2}{B_1}\notag\\
Q&=4\left[B_1^2\alpha(3-2p)+2|B_2|(1+\alpha-p)+B_1(1+\alpha-2p)\right],\label{eqxyz4}\\
R&=16pB_1,\notag\end{align}
 By using \eqref{ma}, we have
\[|a_2a_4-a_3^2|\leq T\left\{
    \begin{array}{ll}
      R, & \hbox{$Q\leq0$, $P\leq-\frac{Q}{4}$;} \\[10pt]
      16P+4Q+R, & \hbox{$Q\geq0$, $P\geq-\frac{Q}{8}$ or $Q\leq0$, $P\geq-\frac{Q}{4}$;} \\[10pt]
      \frac{4PR-Q^2}{4P}, & \hbox{$Q>0$, $P\leq-\frac{Q}{8}$}
    \end{array}
  \right.\]
where $P,Q,R$ are given in \eqref{eqxyz4}.
\end{proof}

\begin{remark} For $\alpha=0$,  Theorem \ref{thhd4} reduces to Theorem \ref{thhd3}.  For $0\leq\alpha<1$, let $\varphi(z):=(1+(1-2\alpha)z)/(1-z)$. For this function $\varphi$,
  $B_1=B_2=B_3=2(1-\alpha)$. In this case, Theorem \ref{thhd4} reduces to \cite[Theorem 3.1]{gupta}.
\end{remark}

\end{document}